\documentclass[10pt,leqno]{amsart}
\textheight 8in
\textwidth 5.5 in
\voffset -0.3in
\hoffset -0.6in
\usepackage{amsmath}
\usepackage{amsfonts}
\usepackage{amssymb}
\usepackage{graphicx}
\usepackage{color}
\usepackage{hyperref}
\parindent 6pt
\parskip 4pt

  \overfullrule=5pt

\parskip 4pt

\usepackage{xcolor}

\newtheorem{theorem}{Theorem}[section]
\newtheorem*{theorem*}{Theorem}
\newtheorem{lemma}{Lemma}[section]
\newtheorem{corollary}[theorem]{Corollary}

\newtheorem{remark}[theorem]{Remark}


\def\Ric{\text{Ric}}

\def\a{\alpha}
\def\l{\lambda}

\def\p{\partial}

\def\R{\mathbb{R}}

\def\vp{\varphi}

\def\k{\kappa}

\def\Ric{\operatorname{Ric}}



\numberwithin{equation}{section}

\begin{document}

\title[Eigenvalue of $p$-Laplacian on K\"ahler manifolds]{Lower bounds for the first eigenvalue of $p$-Laplacian on K\"ahler manifolds}

\author{Kui Wang} \thanks{The research of the first author is supported by NSFC No.11601359}
\address{School of Mathematical Sciences, Soochow University, Suzhou, 215006, China}
\email{kuiwang@suda.edu.cn}

\author{Shaoheng Zhang} \thanks{}
\address{School of Mathematical Sciences, Soochow University, Suzhou, 215006, China}
\email{20204207037@stu.suda.edu.cn}

\subjclass[2010]{35P15, 53C55}

\keywords{Eigenvalue of $p$-Laplacian, modulus of continuity, K\"ahler manifolds}

\begin{abstract}
We study the eigenvalue problem for the $p$-Laplacian on K\"ahler manifolds. Our first result is a lower bound for the first nonzero eigenvalue of the $p$-Laplacian on compact K\"ahler manifolds  in terms of dimension, diameter, and lower bounds of holomorphic sectional curvature and orthogonal Ricci curvature for $p\in(1, 2]$. Our second result is a sharp lower bound for the first Dirichlet eigenvalue of the $p$-Laplacian on  compact  K\"ahler manifolds with smooth  boundary for $p\in(1,\infty)$. Our results generalize corresponding results for the Laplace eigenvalues on K\"ahler manifolds proved in \cite{LW21}.

\mbox{}

\end{abstract}

\maketitle

	\section{Introduction and Main Results}
Let $(M^m,g)$ be an $m$-dimensional compact  Riemannian manifold possibly with smooth boundary and the $p$-Laplacian operator  of the metric $g$ is defined by
	\[
	\Delta_p u:=\operatorname{div}(|\nabla u|^{p-2}\nabla u)
	\]
    for $p\in (1,\infty)$.
    The $p$-Laplacian eigenvalue equation is
    \begin{equation}
	\label{eqn plaplace equation}
		-\Delta_p u(x)=\lambda |u(x)|^{p-2}u(x), \quad x\in M,
	\end{equation}
    and $\l\in \R$ is called a closed eigenvalue  when $\p M=\emptyset$, a Dirichlet eigenvalue when  $\p M\neq \emptyset$ and $u(x)=0$ on $\p M$, a Neumann eigenvalue when  $\p M\neq \emptyset$ and $\p_\nu u(x)=0$ on $\p M$. Where $\nu$ denotes the outer unit normal to $\p M$.

The purpose of the present  paper  is to study the first nonzero eigenvalues of the $p$-Laplacian
on compact K\"ahler manifolds. In a recent paper \cite{LW21}, Li and the first author  obtained a lower bound for the first nonzero closed and Neumann eigenvalue (when $\p M\neq \emptyset$)
of the Laplacian on compact K\"ahler manifolds in terms of dimension, diameter, and lower bounds of holomorphic sectional curvature and orthogonal Ricci curvature. Precisely, they proved the following theorem.
\begin{theorem}[\cite{LW21}]
	\label{thm 1.1}
	    Let $\left(M^m, g, J\right)$ be a compact K\"ahler manifold of complex dimension $m$ and diameter $D$, whose holomorphic sectional curvature is bounded from below by $4 \kappa_1$ and orthogonal Ricci curvature is bounded from below by $2(m-1) \kappa_2$ for some $\kappa_1, \kappa_2 \in \mathbb{R}$. Let $\mu_{1,2}(M)$ be the first nonzero eigenvalue of the Laplacian on $M$ (with Neumann boundary condition if $M$ has a strictly convex boundary). Then
	    \[
        \mu_{1,2}(M) \geq \bar{\mu}_1\left(m, \kappa_1, \kappa_2, D\right),
        \]
        where $\bar{\mu}_1\left(m, \kappa_1, \kappa_2, D\right)$ is the first Neumann eigenvalue of the one-dimensional eigenvalue problem
        \begin{align}\label{1.2}
        \varphi^{\prime \prime}-\left(2(m-1) T_{\kappa_2}+T_{4      \kappa_1}\right) \varphi^{\prime}=-\lambda \varphi
        \end{align}
        on $[-D/2, D/2]$. Here and in the rest of this paper, we denote the function $T_{\kappa}$ for $\kappa \in \mathbb{R}$ by
	\begin{equation}
		\label{eqn T kappa}
		T_{\kappa}(t) =
		\begin{cases}
			\sqrt{\kappa}
			\tan (\sqrt{\kappa}t), & \kappa >0,\\
			0, & \kappa =0,\\
			-\sqrt{-\kappa}
			\tanh(\sqrt{-\kappa}t), & \kappa <0.
		\end{cases}
	\end{equation}
	\end{theorem}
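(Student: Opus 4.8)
The plan is to adapt the method of modulus of continuity of Andrews--Clutterbuck to the K\"ahler setting. Let $u$ be a first nonzero eigenfunction of the Laplacian on $M$ (with Neumann condition on $\p M$ when $\p M\neq\emptyset$), so that $\Delta u=-\mu_{1,2}(M)\,u$, and let $\varphi$ be a first nonzero Neumann eigenfunction of the one--dimensional problem \eqref{1.2} on $[-D/2,D/2]$ with eigenvalue $\bar\mu_1:=\bar\mu_1(m,\kappa_1,\kappa_2,D)$; a standard Sturm--Liouville argument shows $\varphi$ may be taken odd with $\varphi'>0$ on $(-D/2,D/2)$ and $\varphi'(\pm D/2)=0$. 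Evolving $u$ by the heat equation $\p_t v=\Delta v$, $v(\cdot,0)=u$, yields $v(\cdot,t)=e^{-\mu_{1,2}t}u$, so it suffices to propagate a modulus of continuity along this flow. After rescaling $u$, an Andrews--Clutterbuck type lemma (using the $C^1$ bound on $u$ together with $\varphi(0)=0<\varphi$ on $(0,D/2]$ and $\varphi'(0)>0$) furnishes a constant $C_0>0$ with
\begin{equation*}
v(y,0)-v(x,0)\le 2C_0\,\varphi\!\left(\tfrac{d(x,y)}{2}\right)\qquad\text{for all }x,y\in M.
\end{equation*}

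Next I would show that, setting $C(t):=C_0e^{-\bar\mu_1 t}$, the bound $v(y,t)-v(x,t)\le 2C(t)\,\varphi(d(x,y)/2)$ persists for all $t\ge0$, via a maximum principle applied to
\begin{equation*}
Z(x,y,t):=v(y,t)-v(x,t)-2C(t)\,\varphi\!\left(\tfrac{d(x,y)}{2}\right).
\end{equation*}
At a first time and point $(\bar x,\bar y)$, $\bar x\neq\bar y$, where $Z$ attains the value $0$, one computes $\p_t Z$ minus the trace of its Hessian in the $x$- and $y$-variables, feeding in a frame of vector fields along a minimizing unit-speed geodesic $\gamma$ from $\bar x$ to $\bar y$; cut-locus points are handled by the usual support-function device, and when $\p M\neq\emptyset$ the assumed convexity of $\p M$ rules out the extremum occurring on the boundary (and the endpoint case $d(\bar x,\bar y)=D$ is absorbed using $\varphi'(\pm D/2)=0$). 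The spatial terms contribute a $\varphi''$ term from the direction $\gamma'$ together with a drift term proportional to $\varphi'$ coming from the second variation of arclength, and the resulting quantity is forced to be strictly negative precisely because $\varphi$ solves \eqref{1.2} and $C'=-\bar\mu_1 C$, contradicting $\p_t Z\ge0$ at a first zero.

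The crux---and the only place the K\"ahler hypotheses enter---is the curvature estimate for that second variation. Instead of expanding in a generic parallel orthonormal frame and invoking a single Ricci bound over all $2m-1$ normal directions, one splits $(\gamma')^{\perp}\subset T_{\gamma(s)}M$ as $\operatorname{span}(J\gamma')\oplus W$, where $W$ is the $(2m-2)$-dimensional orthogonal complement of $\operatorname{span}(\gamma',J\gamma')$. Since $\nabla J=0$, the field $J\gamma'$ is parallel and of unit length along $\gamma$, and the index form along it is controlled by the holomorphic sectional curvature $\Sect(\gamma',J\gamma')\ge 4\kappa_1$, hence by the model function $T_{4\kappa_1}$; along a parallel orthonormal frame of $W$ the sum of the index forms is controlled by $\sum_i\Sect(\gamma',e_i)=\Ric^{\perp}(\gamma',\gamma')\ge 2(m-1)\kappa_2$, hence by $2(m-1)T_{\kappa_2}$. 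Combining the contributions at $\bar x$ and $\bar y$, the drift produced by the second variation is exactly $2(m-1)T_{\kappa_2}(d/2)+T_{4\kappa_1}(d/2)$, i.e.\ the drift coefficient of \eqref{1.2}; this matching is what makes the one-dimensional comparison sharp.

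Finally, letting $t\to\infty$ in $v(y,t)-v(x,t)\le 2C_0e^{-\bar\mu_1 t}\varphi(d(x,y)/2)$ and substituting $v=e^{-\mu_{1,2}t}u$ gives $u(y)-u(x)\le 2C_0e^{(\mu_{1,2}-\bar\mu_1)t}\varphi(d(x,y)/2)$ for all $x,y$ and all $t\ge0$; if $\mu_{1,2}(M)<\bar\mu_1$ the right-hand side tends to $0$, forcing $u$ to be constant, which contradicts $u$ being a nonzero eigenfunction. Hence $\mu_{1,2}(M)\ge\bar\mu_1(m,\kappa_1,\kappa_2,D)$. I expect the main obstacle to be the K\"ahler second-variation computation---isolating the $J\gamma'$ direction from the orthogonal complement $W$ and matching the constants to \eqref{1.2}---together with the technical points of producing the initial modulus $C_0$ and controlling the cut locus and (if present) the boundary in the maximum principle step.
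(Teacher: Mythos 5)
Your proposal is correct and follows essentially the same route as the paper: Theorem~\ref{thm 1.1} is cited from \cite{LW21}, and this paper's own proof of its $p$-Laplacian generalization (Theorem~\ref{thm 1.2} via Theorem~\ref{thm 3.1}) proceeds exactly as you describe, running the heat semigroup, propagating a modulus of continuity $2C(t)\varphi(d/2)$, and in the second-variation step decomposing $(\gamma')^\perp$ into $J\gamma'$ (controlled by $H\ge 4\kappa_1$ with comparison field built from $c_{4\kappa_1}$) and the remaining $2m-2$ directions (controlled by $\Ric^\perp\ge 2(m-1)\kappa_2$ with $c_{\kappa_2}$), then letting $t\to\infty$. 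The only minor difference is that the paper sidesteps the degenerate endpoint $d(\bar x,\bar y)=D$ (where $\varphi'(D/2)=0$, or where $T_{4\kappa_1}$ could blow up when $D=\pi/\sqrt{4\kappa_1}$) by first working on $[-D_1/2,D_1/2]$ with $D_1>D$, so that $\varphi'>0$ on all of $[0,D/2]$, and letting $D_1\downarrow D$ at the end, rather than absorbing that case directly as you suggest.
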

On Riemannian manifolds, the above theorem known as Zhong-Yang estimate is proved by Zhong and Yang \cite{ZY84} for the case $\Ric\ge 0$, and by Kr\"oger \cite{Kr92} for the case $\Ric\ge (n-1)\k$ for general $\k\in\R$. These works use the gradient estimates method initiated by Li \cite{Li79} and Li-Yau \cite{LY80}.
In 2013, Andrews and Clutterbuck \cite{AC13} gave a simple proof via the modulus of continuity estimates for the solutions to the heat equation, see also \cite{CW94} for a coupling method, and  \cite{ZW17} for an elliptic proof based on \cite{An15} and \cite{Ni13}. Recently, Valtorta \cite{Va12} and Naber-Valtorta \cite{NV14} established the Zhong-Yang estimate for the first nonzero eigenvalue for $p$-Laplacian on Riemannian manifolds.

On compact K\"ahler manifolds, Li and the first author  proved a lower bound of the first nonzero eigenvalue of Laplacian in terms of geometric data (see Theorem \ref{thm 1.1}), and  Rutkowski and Seto \cite{BS22} established an explicit lower bound by studying the one-dimensional ODE \eqref{1.2}. On noncompact K\"ahler manifolds, Li-Wang \cite{LW05} and Munteanu \cite{Mu09} obtained upper bounds of the first Laplace eigenvalue under  conditions of bisectional curvature and Ricci curvature respectively. On closed K\"ahler manifolds with positive Ricci curvature bound from below, Lichnerowicz \cite{Li58} obtained a optimal lower bound of the first nonzero eigenvalue of Laplacian by using Reilly formula, and generalized by Blacker and Seto \cite{BS19} to the first nonzero eigenvalue of  $p$-Laplacian for $p\ge 2$.
It is thus a natural question to study the first nonzero eigenvalue of $p$-Laplacian on K\"ahler manifolds for $p\in(1, 2]$. Regarding this, we prove
	\begin{theorem}
	\label{thm 1.2}
	Let $(M^{m},g,J)$ be a compact K\"{a}hler manifold of complex dimension $m$ and diameter $D$, whose holomorphic sectional curvature is bounded from below by $4 \kappa_1$ and orthogonal Ricci curvature is bounded from below by $2(m-1) \kappa_2$ for some $\kappa_1, \kappa_2 \in \mathbb{R}$.
	Let $\mu_{1,p}(M)$ be the first nonzero eigenvalue of the $p$-Laplacian on M (with Neumann boundary condition if $M$ has a strictly convex boundary). Assume $1<p\leq 2$, then
	\begin{align}
		\mu_{1,p}(M) \geq \bar{\mu}_{1}(m,p,\kappa_1,\kappa_2,D),
	\end{align}
	where $\bar{\mu}_{1}(m,p,\kappa_1,\kappa_2,D)$ is the first nonzero Neumann eigenvalue of the one-dimensional eigenvalue problem
	\begin{equation}
	\label{1.5}
		(p-1)|\varphi'|^{p-2}\varphi''
		-( 2(m-1)T_{\kappa_2} +T_{4\kappa_1} )|\varphi'|^{p-2}\varphi'
		= -\mu |\varphi|^{p-2} \varphi
	\end{equation}
	on interval $[-D/2,D/2]$.
	\end{theorem}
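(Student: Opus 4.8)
The plan is to follow the modulus of continuity / viscosity method of Andrews--Clutterbuck \cite{AC13}, as adapted to the $p$-Laplacian by Valtorta and Naber--Valtorta \cite{Va12,NV14}, but run it in the K\"ahler setting where the doubled-distance comparison must be controlled using holomorphic sectional and orthogonal Ricci curvature rather than the full Ricci lower bound. Let $u$ be a first eigenfunction of the $p$-Laplacian with eigenvalue $\mu = \mu_{1,p}(M)$, normalized so that $\max u = -\min u =: k$ (using that $|u|^{p-2}u$ has mean zero forces $u$ to change sign). Associated with the model ODE \eqref{1.5} is a first eigenfunction $\varphi$ on $[-D/2,D/2]$ with $\varphi' > 0$ on the interior and $\varphi'(\pm D/2)=0$; after rescaling we may assume $\sup\varphi = k = \sup u$. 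The goal is to prove the comparison
\begin{equation}\label{eqn:moc-goal}
u(x) - u(y) \le 2\varphi\!\left(\frac{\dist(x,y)}{2}\right) \quad \text{for all } x,y\in M,
\end{equation}
since evaluating \eqref{eqn:moc-goal} at points where $u$ attains its max and min and using $\dist(x,y)\le D$ (and that $\varphi$ is increasing) is \emph{not} immediately conclusive; rather, the sharp conclusion $\mu \ge \bar\mu_1$ comes from the fact that $\varphi$ is the extremal modulus, so any strict failure would let one shrink the interval and lower $\bar\mu_1$, contradicting that \eqref{eqn:moc-goal} is the tightest such estimate. More precisely, following \cite{NV14}, one shows that if $u$ is a solution with $\mu < \bar\mu_1$ then $\varphi$ can be chosen to \emph{strictly} beat the modulus of $u$, and a continuity/first-contact argument forces equality to be violated at an interior contact point, a contradiction.

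The heart of the argument is the elliptic maximum-principle step at a would-be first contact point. Suppose \eqref{eqn:moc-goal} (with a small slack parameter, or with $\varphi$ replaced by a slightly larger subsolution) fails, and let $(x_0,y_0)$, $x_0\ne y_0$, be a point realizing the maximum of $u(x)-u(y) - 2\varphi(\dist(x,y)/2)$; let $\gamma$ be a minimizing geodesic from $y_0$ to $x_0$ of length $2s_0$, and let $e_1 = \gamma'$ at the endpoints. At $x_0$ one has $\nabla u = \varphi'(s_0)\,e_1$ and similarly at $y_0$, so $|\nabla u| = \varphi'(s_0) \neq 0$ at both points and $u$ is smooth there (the $p$-Laplacian is uniformly elliptic away from critical points, which is exactly why the restriction to the regularity theory works; here $1<p\le 2$ is harmless for this local step). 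The second-order condition gives, after choosing a parallel orthonormal frame $\{e_i\}$ along $\gamma$,
\begin{equation}\label{eqn:second-order}
\sum_{i} \Big(\mathrm{Hess}\,u(e_i,e_i)\big|_{x_0} - \mathrm{Hess}\,u(e_i,e_i)\big|_{y_0}\Big) \le -\tfrac12\,\varphi''(s_0)\sum_i \langle J_i(s_0),\dot J_i(s_0)\rangle\text{-type terms} + \varphi''(s_0),
\end{equation}
where the Jacobi-field error terms are estimated from above by the second variation of arclength. This is the step where the K\"ahler hypotheses enter: one splits the frame into $e_1=\gamma'$, its conjugate $Je_1$, and the orthogonal $2(m-1)$ directions, builds the specific test Jacobi fields used in \cite{LW21} (namely $\varphi'(\cdot)/\varphi'(s_0)$ scaling in the $Je_1$ direction and in the orthogonal directions with the comparison functions attached to $4\kappa_1$ and $\kappa_2$ respectively), and uses the curvature lower bounds $H\ge 4\kappa_1$, $\Ric^{\perp}\ge 2(m-1)\kappa_2$ to bound the index form. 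Feeding this into the $p$-Laplacian — written in the form $\Delta_p u = |\nabla u|^{p-2}\big(\Delta u + (p-2)\,\mathrm{Hess}\,u(\tfrac{\nabla u}{|\nabla u|},\tfrac{\nabla u}{|\nabla u|})\big)$ — and using that both $x_0$ and $y_0$ are smooth non-critical points with gradient aligned along $e_1$, one gets at $x_0$ and $y_0$:
\begin{equation}\label{eqn:plap-expansion}
-\mu|u|^{p-2}u = |\varphi'(s_0)|^{p-2}\Big((p-1)\,\mathrm{Hess}\,u(e_1,e_1) + \sum_{i\ge 2}\mathrm{Hess}\,u(e_i,e_i)\Big).
\end{equation}
Subtracting the two copies of \eqref{eqn:plap-expansion}, inserting \eqref{eqn:second-order} with the K\"ahler index-form estimate, and using that $\varphi$ solves \eqref{1.5} exactly, one obtains $-\mu(|u(x_0)|^{p-2}u(x_0) - |u(y_0)|^{p-2}u(y_0)) \le -\bar\mu_1(\cdots)(|\varphi(s_0)|^{p-2}\varphi(s_0)-|\varphi(-s_0)|^{p-2}\varphi(-s_0))$ up to sign-favorable terms, and since at the contact point $u(x_0)-u(y_0) = 2\varphi(s_0)$ with $\varphi$ odd, monotonicity of $t\mapsto |t|^{p-2}t$ yields the contradiction when $\mu<\bar\mu_1$.

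I expect the main obstacle to be the regularity/approximation scaffolding rather than the geometric comparison. Because $p$-eigenfunctions are only $C^{1,\alpha}$ in general and genuinely singular at critical points, one cannot naively differentiate $u$ twice at an arbitrary contact point; the standard fix (cf. \cite{Va12,NV14}) is to run the argument on the open set where $\varphi' > 0$ along the connecting geodesic, which guarantees $|\nabla u|\ne 0$ at the contact point by the first-order conditions, hence local smoothness and uniform ellipticity there — but one must still justify that the first contact cannot occur \emph{on} the boundary set $\{x=y\}$ (handled by a separate blow-up/short-time argument showing the modulus is strictly sub-Lipschitz near the diagonal, using $\varphi''<0$), and, in the boundary case, that a strictly convex boundary prevents the contact point from escaping to $\partial M$ (the Neumann condition makes the normal derivative of $u(x)-u(y)-2\varphi(\dist/2)$ point inward by convexity, exactly as in \cite{AC13,NV14}). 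A secondary technical point is that the model ODE \eqref{1.5} is itself degenerate at $\varphi'=0$, so existence of the eigenpair $(\bar\mu_1,\varphi)$ with $\varphi'>0$ on the interior, $\varphi$ odd, and the needed concavity/monotonicity of $\varphi'$ must be established (or imported) before the comparison can even be set up; this parallels the one-dimensional analysis in \cite{NV14} and the K\"ahler ODE discussion in \cite{LW21,BS22}, and I would state it as a preliminary lemma.
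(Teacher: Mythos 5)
Your proposal takes the elliptic route (a maximum-principle argument at a first contact point, in the spirit of Valtorta and Naber--Valtorta), whereas the paper runs the Andrews--Clutterbuck parabolic modulus-of-continuity machine on the $1$-homogeneous flow $v_t = |\Delta_p v|^{-(p-2)/(p-1)}\Delta_p v$ (Theorem 3.1) and then feeds in $v(x,t)=e^{-\mu^{1/(p-1)}t}u(x)$ against $\varphi(s,t)=C e^{-\bar\mu_1^{1/(p-1)}t}\phi(s)$, sending $t\to\infty$. Your account of the geometric input is essentially identical to the paper's: split the parallel frame along the minimizing geodesic into $e_1=\gamma'$, $e_2=J\gamma'$, and the $2(m-1)$ orthogonal directions, build test variations from $c_{4\kappa_1}$ and $c_{\kappa_2}$, and estimate the index form with $H\ge 4\kappa_1$ and $\Ric^\perp\ge 2(m-1)\kappa_2$. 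That part is correct and matches inequalities (3.10)--(3.13) of the paper.

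The elliptic skeleton, however, has two genuine gaps that the parabolic scaffolding is specifically designed to avoid, and that cannot be patched by routine approximation. First, the normalization ``$\max u = -\min u$'' is not available when $p\ne 2$: the equation $-\Delta_p u=\mu|u|^{p-2}u$ is not invariant under $u\mapsto u+c$, so the mean-zero constraint only forces $u$ to change sign, not to have symmetric extrema. Second, and more seriously, your final contradiction does not close. The contact relation gives only the difference $u(x_0)-u(y_0)=2\varphi(s_0)$, while subtracting the two copies of the eigenvalue equation produces $|u(x_0)|^{p-2}u(x_0)-|u(y_0)|^{p-2}u(y_0)$; since $t\mapsto|t|^{p-2}t$ is not affine, this quantity cannot be compared to $2|\varphi(s_0)|^{p-2}\varphi(s_0)$ knowing only the difference of the $u$-values. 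The paper's route sidesteps both issues at once: one only needs domination at $t=0$ (achieved by scaling $\phi$ by a large constant $C$, no normalization of $u$ required), and the nonlinearity that must be ``telescoped'' at the contact point is $h(t)=|t|^{-(p-2)/(p-1)}t$ applied to $\Delta_p v$, for which the superadditivity $h(a+2b)\le h(a)+2h(b)$ (Lemma 2.1 of \cite{LW21-c}) holds precisely in the range $1<p\le 2$ --- this is where the hypothesis on $p$ is actually used. If you insist on an elliptic argument, you would need the full gradient-comparison framework of \cite{NV14} rather than a direct contact-point argument as sketched.
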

	

	

	

Note that if $\kappa_1=\kappa_2=0$,
	ODE \eqref{1.5} can be solved by $p$-trigonometric functions, then as a direct consequence of Theorem \ref{thm 1.2}, we have
	\begin{corollary}
		With the same assumptions as in Theorem \ref{thm 1.2}, and assume $\k_1=\k_2=0$. Then
		\begin{equation}
			\label{1.6}
			\mu_{1,p}(M) \geq (p-1)(\frac{\pi_p}{D})^{p},
		\end{equation}
		where $\displaystyle \pi_p=\frac{2 \pi}{p \sin(\pi /p)}$.
	\end{corollary}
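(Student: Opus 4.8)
The plan is to derive the corollary directly from Theorem \ref{thm 1.2} by explicitly identifying the one-dimensional eigenvalue $\bar\mu_1(m,p,0,0,D)$ when $\kappa_1=\kappa_2=0$. In that case $T_{4\kappa_1}=T_{\kappa_2}\equiv 0$, so the ODE \eqref{1.5} reduces to the one-dimensional $p$-Laplacian eigenvalue equation
\[
(p-1)|\varphi'|^{p-2}\varphi'' = -\mu|\varphi|^{p-2}\varphi \quad \text{on } [-D/2,D/2]
\]
with Neumann conditions $\varphi'(\pm D/2)=0$. First I would recall that this equation is solved by the so-called $p$-trigonometric functions: writing $\sin_p$ for the solution of $(p-1)|u'|^{p-2}u'' = -|u|^{p-2}u$ with $\sin_p(0)=0$, $(\sin_p)'(0)=1$, one has that $\sin_p$ is $2\pi_p$-periodic (more precisely has its first critical point at $\pi_p/2$), where $\pi_p = \frac{2\pi}{p\sin(\pi/p)}$ is, up to normalization, twice the value $\int_0^1 (1-s^p)^{-1/p}\,ds$ evaluated via the Beta function. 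The first nonzero Neumann eigenfunction on $[-D/2,D/2]$ is then $\varphi(t) = \cos_p(\alpha t) := \sin_p(\alpha t + \pi_p/2 \cdot(\cdots))$ — concretely, one takes $\varphi(t)$ to be a shifted/scaled $p$-sine so that $\varphi'$ vanishes exactly at the two endpoints; matching $\alpha\cdot(D/2) = \pi_p/2$ gives $\alpha = \pi_p/D$.

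Next I would substitute this $\varphi$ back into the ODE to read off the eigenvalue. With $\varphi(t) = \cos_p(\pi_p t/D)$, differentiating twice and using the defining identity $(p-1)|\cos_p'|^{p-2}\cos_p'' = -|\cos_p|^{p-2}\cos_p$ (the $p$-analogue of $\cos'' = -\cos$), the chain rule introduces a factor $\alpha^p = (\pi_p/D)^p$ on the left-hand side, so that
\[
(p-1)|\varphi'|^{p-2}\varphi'' = -(p-1)\Bigl(\frac{\pi_p}{D}\Bigr)^p |\varphi|^{p-2}\varphi,
\]
hence $\mu = (p-1)(\pi_p/D)^p$. I would then argue (or cite the standard theory of the one-dimensional $p$-Laplacian, e.g. as in \cite{NV14} or the $p$-trigonometric literature) that this is indeed the \emph{first} nonzero Neumann eigenvalue — the eigenfunction $\cos_p$ is monotone on $[-D/2,D/2]$ and has exactly one sign change, which characterizes the first nonzero mode by Sturm–Liouville-type oscillation arguments adapted to the $p$-Laplacian. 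Therefore $\bar\mu_1(m,p,0,0,D) = (p-1)(\pi_p/D)^p$, and \eqref{1.6} follows from Theorem \ref{thm 1.2}.

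The only genuinely delicate point is pinning down the normalization constant so that $\pi_p = \frac{2\pi}{p\sin(\pi/p)}$ matches the location of the first critical point of the relevant $p$-trigonometric function; this is a computation with the Beta function, using $\int_0^1 s^{a-1}(1-s)^{b-1}\,ds = B(a,b) = \Gamma(a)\Gamma(b)/\Gamma(a+b)$ together with the reflection formula $\Gamma(1/p)\Gamma(1-1/p) = \pi/\sin(\pi/p)$. Everything else is bookkeeping. I would keep the corollary's proof short, stating the reduction of \eqref{1.5}, quoting the $p$-trigonometric solution with a reference, and recording the resulting eigenvalue.
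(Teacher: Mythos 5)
Your approach is the same one the paper intends (the paper treats the corollary as immediate, saying only that for $\kappa_1=\kappa_2=0$ the ODE \eqref{1.5} is solved by $p$-trigonometric functions), and the final answer is right, but there is an internal inconsistency in your normalization that you should fix before this could be spliced in.

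You define $\sin_p$ as the solution of $(p-1)|u'|^{p-2}u'' = -|u|^{p-2}u$ with $u(0)=0$, $u'(0)=1$, and simultaneously claim its first critical point is at $\pi_p/2 = \frac{\pi}{p\sin(\pi/p)} = \int_0^1(1-s^p)^{-1/p}\,ds$. These two statements are incompatible. Integrating your ODE once gives the first integral $(p-1)|u'|^p + |u|^p = p-1$, so $u$ peaks at $(p-1)^{1/p}$ and the first critical point sits at $(p-1)^{1/p}\pi_p/2$, not at $\pi_p/2$. The normalization that matches $\pi_p/2 = \int_0^1(1-s^p)^{-1/p}\,ds$ is the one coming from $(\sin_p')^p + (\sin_p)^p = 1$, whose differentiated form is $|\sin_p'|^{p-2}\sin_p'' = -|\sin_p|^{p-2}\sin_p$, with \emph{no} factor $(p-1)$. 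Moreover, if one actually follows the chain-rule computation with the identity as you wrote it, the $(p-1)$ on the left cancels against the $(p-1)$ in the defining identity and one lands on $\mu = (\pi_p/D)^p$, not $(p-1)(\pi_p/D)^p$; your displayed equation $(p-1)|\varphi'|^{p-2}\varphi'' = -(p-1)(\pi_p/D)^p|\varphi|^{p-2}\varphi$ does not follow from the identity you stated. With the corrected identity $|\sin_p'|^{p-2}\sin_p'' = -|\sin_p|^{p-2}\sin_p$, taking $\varphi(t)=\sin_p(\pi_p t/D)$ (which, being odd, already has zero derivative at both endpoints of $[-D/2,D/2]$, so there is no need to introduce $\cos_p$) gives directly $(p-1)|\varphi'|^{p-2}\varphi'' = -(p-1)(\pi_p/D)^p|\varphi|^{p-2}\varphi$ and hence the stated eigenvalue. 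Your identification of this $\varphi$ as the \emph{first} nonzero mode via monotonicity and a single sign change is consistent with the paper's Lemma~\ref{lm existence of odd function}.
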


	Now let us turn to lower bounds for Dirichlet eigenvalues on compact K\"ahler manifolds with smooth boundary. For  $\kappa, \Lambda \in \mathbb{R}$, denote by $C_{\kappa,\Lambda }$ the unique solution of the initial value problem
\begin{equation}
	\label{1.7}
		\left \{
		\begin{aligned}
			&\phi''(t)+\kappa \phi(t)=0,\\
			&\phi(0)=1,\ \phi'(0)=-\Lambda,
		\end{aligned}
		\right.
	\end{equation}
and denote $T_{\kappa,\Lambda}$ for
	$\kappa, \Lambda \in \mathbb{R}$ by
	\begin{equation}
	\label{1.8}
	    T_{\kappa,\Lambda}(t):
	    =-\frac{ C_{\kappa,\Lambda}'(t) }
	{ C_{\kappa,\Lambda}(t)}.	
	\end{equation}
For the first Dirichlet eigenvalue on compact K\"ahler manifolds, Li and the first author \cite[Theorem 1.4]{LW21} obtained a lower bound of the first Dirichlet eigenvalue of Laplacian using Barta's inequality, and Blacker-Seto \cite{BS19} proved a lower bound of the first Dirichlet eigenvalue of $p$-Laplacian for $p\geq2$ via the $p$-Reilly formula. Our second main result is the following $p$-Laplacian analogue of the lower bounds for the first Dirichlet eigenvalue on K\"ahler manifolds for $1<p<\infty$.
	
	\begin{theorem}
	\label{thm 1.3}
		Let $(M^m, g, J)$ be a compact K\"{a}hler manifold with smooth  boundary $\partial M$.
		Suppose that the holomorphic sectional curvature is bounded from below by $4\kappa_1$ and the orthogonal Ricci curvature is bounded from below by $2(m-1)\kappa_2$ for $\kappa_1,\kappa_2 \in \mathbb{R}$, and the second fundamental form on $\partial M$ is bounded from below by $\Lambda \in \mathbb{R}$.
		Let $\lambda_{1,p}(M)$ be the first Dirichlet eigenvalue of the $p$-Laplacian on $M$.
		Assume $1<p<\infty$, then
		\begin{align}\label{1.9}
		\lambda_{1,p}(M) \geq \bar{\lambda}_1(m,p,\kappa_1,\kappa_2,D),
		\end{align}
		where $\bar{\lambda}_1(m,p,\kappa_1,\kappa_2,D)$ is the first eigenvalue of the one-dimensional eigenvalue problem
\begin{align}\label{1.10}
		(p-1)|\vp'|^{p-2}\vp''
		- \big(2(m-1)T_{\kappa_2,\Lambda}
		+T_{4\kappa_1,\Lambda}\big)|\vp'|^{p-2} \vp' = -\l|\varphi|^{p-2} \varphi	
\end{align}
on $[0, D/2]$ with boundary conditions $\varphi(0)=0$ and $\varphi'(R)=0$.
	\end{theorem}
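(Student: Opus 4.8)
The plan is to prove Theorem \ref{thm 1.3} via a modulus-of-continuity (or, in the elliptic formulation, a gradient-comparison) argument for the $p$-Laplacian eigenfunction, adapted to the K\"ahler setting exactly as in the proof of Theorem \ref{thm 1.2} but now accommodating the Dirichlet condition and the boundary term $\Lambda$. Let $u$ be a first Dirichlet eigenfunction on $M$, normalized so that $\min_M u = -1$ and $\max_M u \in (0,1]$; by the strong maximum principle $u$ does not vanish in the interior except on its nodal set. One works with the parabolic flow $\partial_t v = \Delta_p v$ (the $p$-heat flow) with Dirichlet data, whose solutions converge after renormalization to $u$, and seeks a one-dimensional model function $\varphi$ solving \eqref{1.10} on $[0,D/2]$ with $\varphi(0)=0$, $\varphi'(D/2)=0$ that controls the oscillation of $v$. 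The key monotonicity quantity is, for $x,y\in M$,
\[
Z(x,y,t) \;=\; v(y,t) - v(x,t) - 2\varphi\!\Big(\tfrac{d(x,y)}{2}\Big)
\]
adjusted so that the Dirichlet condition is handled by also comparing $v(x,t)$ against $\varphi$ evaluated at the distance to the boundary; concretely one shows $v(x,t) \le \varphi\big(\dist(x,\partial M)\big)$ and the analogous lower bound, which is the Dirichlet analogue of a modulus of continuity.

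The heart of the matter is the differential-inequality computation at a first violation point. First I would recall from \cite{LW21} the K\"ahler second-variation formula for the distance function: along a minimizing geodesic, the Laplacian of $r=d(\cdot,\cdot)$ (or of $\dist(\cdot,\partial M)$) is controlled by a Jacobi-field comparison in which the complex structure $J$ splits the normal bundle into the $J$-direction of the geodesic — contributing the holomorphic sectional curvature term $T_{4\kappa_1}$ — and the orthogonal $(2m-2)$-dimensional complement — contributing the orthogonal Ricci term $2(m-1)T_{\kappa_2}$. For the Dirichlet problem the relevant comparison is with the solution of $\phi''+\kappa\phi=0$, $\phi(0)=1$, $\phi'(0)=-\Lambda$, i.e.\ with $T_{\kappa,\Lambda}$ as in \eqref{1.8}, because the second fundamental form bound $\Lambda$ enters as the initial condition for the Riccati/Jacobi equation governing the focal-point comparison from $\partial M$. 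Combining this with the nonlinear diffusion operator, at a would-be first touching point one gets
\[
0 \;\le\; (p-1)|\varphi'|^{p-2}\varphi'' - \big(2(m-1)T_{\kappa_2,\Lambda} + T_{4\kappa_1,\Lambda}\big)|\varphi'|^{p-2}\varphi' + \mu\,|\varphi|^{p-2}\varphi,
\]
which contradicts \eqref{1.10} when $\mu < \bar\lambda_1$; the parabolic version introduces the term $\partial_t(\text{modulus})$ which vanishes in the limit. The subtlety special to $p\ne 2$ is that $|\nabla v|^{p-2}$ degenerates where $\nabla v = 0$, so one must justify the computation away from critical points and handle the regularity of $u$ and of the model $\varphi$ (whose derivative vanishes only at $D/2$) by approximation, exactly as Valtorta \cite{Va12} and Naber-Valtorta \cite{NV14} do on Riemannian manifolds.

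The main obstacle, and the step requiring the most care, is establishing that the one-dimensional comparison function $\varphi$ from \eqref{1.10} genuinely bounds the eigenfunction's behavior near $\partial M$ — that is, proving the Dirichlet modulus of continuity $|u| \le \varphi\circ\dist(\cdot,\partial M)$ propagates under the $p$-heat flow. This requires (i) a barrier/viscosity argument at $\partial M$ using the second fundamental form bound $\Lambda$ to control $\Delta\,\dist(\cdot,\partial M)$ from the correct side near the boundary, which is precisely why $T_{\kappa,\Lambda}$ rather than $T_\kappa$ appears; (ii) verifying that the model ODE \eqref{1.10} with the stated mixed boundary conditions actually has a positive first eigenfunction $\varphi$ on $[0,D/2]$ with $\varphi>0$ on $(0,D/2]$ and $\varphi'\ge 0$, so that the comparison is monotone; and (iii) the degenerate-ellipticity bookkeeping for $|\varphi'|^{p-2}$ near $t=D/2$. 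Once the modulus estimate is in place, Barta-type reasoning (as in \cite[Theorem 1.4]{LW21}) or a direct test-function argument yields $\lambda_{1,p}(M)\ge \bar\lambda_1(m,p,\kappa_1,\kappa_2,D)$, completing the proof. I would model the write-up on the proof of Theorem \ref{thm 1.2}, isolating the new boundary analysis as the only genuinely different ingredient.
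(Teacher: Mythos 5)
Your proposal heads in a genuinely different — and problematic — direction from the paper, and the difference matters for the stated range of $p$.

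The paper's proof of Theorem~\ref{thm 1.3} is purely elliptic and contains no heat flow. It takes the first eigenfunction $\varphi$ of the one-dimensional problem \eqref{1.10}, forms the radial trial function $v(x)=\varphi(\dist(x,\partial M))$, uses the K\"ahler comparison theorem (Lemma~\ref{lemma main dirichlet eigenvalue}, derived from Theorem~6.1 of \cite{LW21}) to show $\Delta_p v\leq -\bar\lambda_1|v|^{p-2}v$ in the distributional sense across the cut locus, and then applies Barta's inequality to $v$. That is the whole proof. There is no eigenfunction $u$ to control, no $p$-heat flow, and no convergence argument; Barta's inequality replaces the modulus-of-continuity machinery entirely rather than being applied after it, so the clause in your last paragraph — ``once the modulus estimate is in place, Barta-type reasoning\ldots yields\ldots'' — conflates two mutually exclusive strategies. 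You either prove a decay estimate for the flow and read off the eigenvalue bound from the decay rate, or you exhibit a positive subsolution and cite Barta; you do not need both, and invoking Barta after a flow argument is redundant.

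More importantly, the parabolic route you propose as the main line of attack would fail to recover Theorem~\ref{thm 1.3} in the stated generality. The argument of Theorem~\ref{thm 3.1} hinges on the convexity (for $t>0$) of $h(t)=|t|^{-\frac{p-2}{p-1}}t$, which is exactly why Theorem~\ref{thm 1.2} is restricted to $1<p\le 2$. Theorem~\ref{thm 1.3} claims the full range $1<p<\infty$, and the elliptic Barta argument is what makes that range accessible: Lemma~\ref{lemma main dirichlet eigenvalue} and Barta's inequality work for every $p>1$ with no convexity requirement. If you carried your parabolic proposal to completion you would, at best, prove Theorem~\ref{thm 1.3} only for $1<p\le 2$. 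The key geometric ingredient you identify — the K\"ahler comparison for the second derivatives of $\dist(\cdot,\partial M)$ with the Riccati initial condition $\phi'(0)=-\Lambda$ encoding the second-fundamental-form bound — is indeed the right one and is shared with the paper's proof; but you should discard the flow and feed that comparison directly into Barta, which is both shorter and valid for all $p$.
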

 The proof of Theorem \ref{thm 1.3}  relies on a comparison theorem for the second derivatives of distance to the boundary proved in \cite[Section 6]{LW21} and Barta's
inequality. 
In the Riemannian case, lower bounds for the first Dirichlet eigenvalue were proved by Li-Yau \cite{LY80} and by Kause \cite{Ka84}. It remains an interesting question that  whether or not the same result as Theorem \ref{thm 1.2} holds for $p>2$ and the result is sharp. We plan to return to this in the future. The rest of the paper is organized as follows. In Section 2, we recall the definitions of curvatures of K\"ahler manifolds and modulus of continuity of real functions.
Sections 3 and 4 are devoted to proving  Theorem \ref{thm 1.2} and Theorem \ref{thm 1.3}.
	

	\section{Preliminary}
	
	\subsection{Curvatures of K\"{a}hler Manifolds}
	
	Let $(M^m,g,J)$ be a K\"{a}hler manifold with complex dimension $m$ (real dimension is $n=2m$).
	A plane $\sigma \subset T_p M$ is said to be holomorphic if it is invariant by the complex structure tensor $J$.
	The restriction of the sectional curvature to holomorphic planes is called the holomorphic sectional curvature, denoted by $H$.
	In other words, if $\sigma$ is a holomorphic plane spanned by $X$ and $JX$, then the holomorphic sectional curvature of $\sigma$ is defined by
	\[
	H(\sigma):=H(X)=\frac{R(X,J X,X,J X)}{|X|^4}.
	\]
	We say the holomorphic sectional curvature is bounded from below by $\kappa$, if $H(\sigma) \geq \kappa$ for all holomorphic planes $\sigma \subset T_pM$ and all $p \in M$. The orthogonal Ricci curvature, denoted by $\operatorname{Ric}^{\perp}$, is defined for any $X\in T_pM$ by
	\[
	\operatorname{Ric}^{\perp} (X,X)
	:=\operatorname{Ric}(X,X)-H(X)|X|^2.
	\]
	We say the orthogonal Ricci curvature is bounded from below by $\kappa\in \mathbb{R}$,
	if
	$\operatorname{Ric}^{\perp}(X,X) \geq \kappa g(X,X),\forall X \in T_pM, \forall p \in M$,
	
	\begin{remark}
		\label{rmk 2.1}
		If $M$ is a complete K\"ahler manifold with holomorphic sectional curvature satisfies $H \geq \kappa_1$ for some positive $\k_1$, Tsukamoto \cite{Tsu57}  proved  that the diameter of $M$ is bounded from above by $\pi/\sqrt{\kappa_1}$. If
        $M$ is a complete K\"ahler manifold with orthogonal Ricci curvature satisfies $\operatorname{Ric}^\perp \geq 2(m-1)\kappa_2$  for some positive $\k_2$,
        Ni and Zheng \cite{NZ18} proved that the diameter of $M$ is bounded from above by $\pi/\sqrt{\k_2}$.
	\end{remark}
	
\subsection{Modulus of Continuity}
	
	Let $u$ be a continuous function on a metric space $(X,d)$, define the modulus of continuity $\omega$ of $u$ by
	\[
	\omega(s):=
	\sup\big\{ \frac{u(x)-u(y)}{2}|\ d(x,y)=2s, x, y\in X \big\}.
	\]
	Recently, Andrews and Clutterbuck \cite{An15,AC09,AC11,AC13} investigated how the modulus of continuity of  solutions to parabolic differential equations evolves,
	and they proved for a large class of parabolic equation, the modulus of continuity of the solution is a viscosity solution of the associated one-dimensional equations.
	By using the modulus of continuity,
	Andrews and Clutterbuck obtained the sharp lower bound on the fundamental gap for Schr\"odinger operators \cite{AC11}.
	This technique is called modulus of continuity estimate, and is widely used to study the lower bound of the first nonzero eigenvalue in terms of geometric data such as the diameter and dimension of the manifold, see \cite{HWZ20, LW21-d, SWW19} and so on.
	
	\section{The First Nonzero Eigenvalue}
	
	In this section, we shall use the method of modulus of continuity estimates  to prove Theorem \ref{thm 1.2}. The proof presented below is a modification of the argument outlined in the survey by Andrews \cite[Section 8]{An15} for the case of Riemannian manifolds.
Let $\vp(s,t)\in C^{2,1}([0,a]\times[0, \infty))$, and denote by
	\begin{align}
	\mathfrak{L}\varphi
	:=(p-1)|\varphi'|^{p-2}\varphi''
	-( 2(m-1)T_{\kappa_2} +T_{4\kappa_1} )|\varphi'|^{p-2}\varphi',
	\end{align}
	where  we denote  $\frac{\p^k \vp}{\p s^k}$ by $\vp^{(k)}$ for short.
	We first prove the following modulus of continuity estimates for solutions to a nonlinear parabolic equation on K\"{a}hler manifolds.
	\begin{theorem}
	\label{thm 3.1}
		Let $(M^m, g, J)$ be a compact K\"ahler manifold with diameter $D$ whose holomorphic sectional curvature is bounded from below by $4\kappa_1$ and the orthogonal Ricci curvature is bounded from below by $2(m-1)\kappa_2$ for some $\kappa_1,\kappa_2 \in \mathbb{R}$.
		Let $v : M \times [0, T) \rightarrow \mathbb{R}$ be a solution of
		\begin{equation}
			\label{3.2}
			v_t
			=|\Delta_p v|^{-\frac{p-2}{p-1}}\Delta_p v
		\end{equation}
		(with Neumann boundary condition if $M$ has a strictly convex boundary).
		Suppose $1<p\le 2$ and
		$\varphi(s,t): [0,\frac{D}{2}] \times [0,\infty)\rightarrow \mathbb{R}$ satisfies
		\begin{itemize}
			\item[(1)] $v(y,0)-v(x,0) \leq 2 \varphi(\frac{d(x,y)}{2},0), \ \forall x,y \in M$,
			\item[(2)] $0 \geq \varphi_t \geq |\mathfrak{L}\varphi|^{-\frac{p-2}{p-1}} \mathfrak{L}\varphi,\
			(s,t) \in [0,D/2]\times \mathbb{R}_+$,
			\item[(3)] $\varphi'(s,t)>0,\
			(s,t) \in [0,D/2]\times \mathbb{R}_+$,
			\item[(4)] $\varphi(0,t) \geq 0,\ t >0 $.
		\end{itemize}
		Then
		\begin{align}\label{3.3}
		v(y,t)-v(x,t) \leq 2 \varphi(\frac{d(x,y)}{2},t)
		\end{align}
  for $t>0$ and $x,y\in M$.
	\end{theorem}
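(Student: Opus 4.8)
The plan is to prove the differential inequality \eqref{3.3} by a maximum-principle argument. Suppose for contradiction that the desired estimate fails. Since conditions (1)-(4) are preserved under a small perturbation $\varphi\mapsto\varphi+\eps(1+t)$ and the failure of \eqref{3.3} would persist, we may argue with the strict version: first I would assume the stronger hypotheses $\varphi_t > |\mathfrak{L}\varphi|^{-\frac{p-2}{p-1}}\mathfrak{L}\varphi$ and $\varphi(0,t)>0$ hold on the relevant time interval, derive the conclusion, and then recover the general case by letting $\eps\to 0$. Under the strict hypotheses, let $t_0>0$ be the first time at which equality is achieved in \eqref{3.3}, realized at a pair of points $x_0\neq y_0$ (the points must be distinct because $\varphi(0,t_0)>0$), with $d(x_0,y_0)=2s_0$. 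At such a first contact time the function $Z(x,y,t):=v(y,t)-v(x,t)-2\varphi(\tfrac{d(x,y)}{2},t)$ has an interior spatial maximum equal to $0$ at $(x_0,y_0,t_0)$, so $\partial_t Z\ge 0$ there while the first spatial derivatives vanish and the spatial Hessian is negative semidefinite.

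The heart of the argument is to translate these derivative conditions into an inequality. Using the vanishing of the first derivatives of $Z$, one gets $|\nabla v|(x_0)=|\nabla v|(y_0)=\varphi'(s_0,t_0)=:\varphi'$; in particular $\nabla v$ is nonzero at both points by condition (3), so $\Delta_p v$ makes sense classically there. The standard device (as in \cite{AC13,An15}) is to feed into the Hessian inequality a judicious variation: transport an orthonormal frame along a minimizing geodesic $\gamma$ from $x_0$ to $y_0$ and test $D^2 Z\le 0$ against the vector fields $(e_i,e_i)$ and against $(\dot\gamma,-\dot\gamma)$ suitably, summing over $i$. The second-variation-of-arclength formula produces the Laplacian-of-distance term, which is controlled from above by the one-dimensional model operator precisely because of the curvature hypotheses — this is where the K\"ahler structure enters, and one invokes the comparison that the sum $2(m-1)T_{\kappa_2}+T_{4\kappa_1}$ dominates $\Delta\,\tfrac{d}{2}$ along $\gamma$ under the holomorphic-sectional-curvature bound $4\kappa_1$ and orthogonal-Ricci bound $2(m-1)\kappa_2$. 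Carrying this out yields
\begin{equation*}
\Delta v(y_0)-\Delta v(x_0)\le 2\Big((p-1)\,\text{(second-order part)}\Big)\Big/\varphi'^{\,p-2}\cdot\varphi'^{\,p-2}\ -\ \big(2(m-1)T_{\kappa_2}+T_{4\kappa_1}\big)\cdot 2\varphi',
\end{equation*}
which, after multiplying by $|\nabla v|^{p-2}=\varphi'^{\,p-2}$ and reorganizing, gives $\Delta_p v(y_0)-\Delta_p v(x_0)\le 2\,\mathfrak{L}\varphi(s_0,t_0)$.

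From here the plan is to compare the time derivatives. At the contact point $\partial_t Z\ge0$ reads
\[
|\Delta_p v(y_0)|^{-\frac{p-2}{p-1}}\Delta_p v(y_0)\ -\ |\Delta_p v(x_0)|^{-\frac{p-2}{p-1}}\Delta_p v(x_0)\ \ge\ 2\varphi_t(s_0,t_0).
\]
Since $1<p\le 2$, the function $r\mapsto |r|^{-\frac{p-2}{p-1}}r=|r|^{\frac{1}{p-1}}\operatorname{sgn}(r)$ is monotone nondecreasing and concave on each half-line in a way that makes it subadditive with respect to the relevant differences; combined with the bound $\Delta_p v(y_0)-\Delta_p v(x_0)\le 2\mathfrak{L}\varphi$ just obtained, one deduces that the left-hand side is at most $|\mathfrak{L}\varphi|^{-\frac{p-2}{p-1}}\mathfrak{L}\varphi$ evaluated at $(s_0,t_0)$ — here the precise monotonicity/concavity estimate for the nonlinearity is the delicate point and must use $p\le 2$. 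But condition (2) (in its strict form) says $\varphi_t>|\mathfrak{L}\varphi|^{-\frac{p-2}{p-1}}\mathfrak{L}\varphi$, so we arrive at $2\varphi_t>2\varphi_t$, a contradiction. Finally, letting $\eps\to0$ removes the perturbation and yields \eqref{3.3}.

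\textbf{Main obstacle.} I expect the genuinely hard step to be the handling of the degenerate nonlinearity. Two issues intertwine: (i) ensuring $\Delta_p v$ is classically defined at the contact points, which follows from $\varphi'>0$ forcing $\nabla v\neq0$ there, and that the parabolic equation \eqref{3.2} is non-degenerate along the flow; and (ii) establishing the correct comparison for $r\mapsto|r|^{-\frac{p-2}{p-1}}r$ so that the inequality $\Delta_p v(y_0)-\Delta_p v(x_0)\le 2\mathfrak{L}\varphi$ upgrades to the inequality for the time-derivative nonlinearity. The restriction $1<p\le2$ is essential precisely at (ii): for $p\le 2$ the exponent $\frac{1}{p-1}\ge 1$ and the map is convex-type on $(0,\infty)$, giving the needed one-sided bound, whereas for $p>2$ the inequality reverses and the scheme breaks — which is exactly why the theorem is stated only for $p\in(1,2]$. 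The curvature comparison feeding the Hessian step is, by contrast, essentially quoted from \cite{LW21} and the second-variation computation, so it is technical but not conceptually new.
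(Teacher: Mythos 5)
Your proposal is correct and follows essentially the same modulus-of-continuity maximum-principle argument as the paper: perturb to force strict inequalities, locate a first touching point $(x_0,y_0,t_0)$, use the vanishing of the first spatial derivatives to identify $\nabla v$ with $\varphi' e_1$ at both endpoints, run a second-variation-of-arclength comparison along a parallel frame adapted to $J$ (with $e_2=J\dot\gamma$) so that the holomorphic sectional and orthogonal Ricci bounds deliver $\Delta_p v(y_0)-\Delta_p v(x_0)\le 2\mathfrak{L}\varphi(s_0,t_0)$, and then pass that inequality through the nonlinearity $h(r)=|r|^{1/(p-1)}\operatorname{sgn}(r)$ to contradict the time-derivative hypothesis. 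One correction on the step you yourself flagged as delicate: you first write that $h$ is ``concave on each half-line,'' but the estimate actually needed, namely $h(b-2\delta)\le h(b)-2h(\delta)$ for $\delta\ge 0$, holds precisely because $h$ is odd and \emph{convex} on $[0,\infty)$ when $1<p\le 2$ (exponent $1/(p-1)\ge 1$) --- you say this correctly at the end by calling it ``convex-type,'' and it is exactly the content of the lemma from \cite{LW21-c} that the paper invokes.
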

	
	
    \begin{proof}
		For $ \varepsilon >0$, let
		\begin{align*}
		A_{\varepsilon}(x,y,t)
		=v(y,t)-v(x,t)-2\varphi(\frac{d(x,y)}{2},t)
		-\varepsilon e^t.
		\end{align*}
		To prove \eqref{3.3}, it suffices to show that $A_{\varepsilon}<0$ for any $\varepsilon>0$.

  We argue by contradiction and assume that there exists  $(x_0,y_0,t_0)$ such that $A_{\varepsilon}$ attains its maximum zero on
		$M \times M \times [0,t_0]$ at $(x_0,y_0,t_0)$.
		Clearly $x_0 \neq y_0$, and $t_0 >0$. If $\p M\neq \emptyset$,
		similarly as in \cite[Theorem 3.1]{LW21},
		the strictly convexity of boundary, the Neumann condition and the positivity of $\varphi'$ rule out the possibility that $x_0 \in \partial M$ and $y_0 \in \partial M$.
		
		Compactness of $M$ implies that there exists an arc-length minimizing geodesic $\gamma_0$ connecting $x_0$ and $y_0$ such that $\gamma_0(-s_0)=x_0$ and $\gamma_0(s_0)=y_0$ with $s_0=d(x_0,y_0)/2$.
		We pick an orthonormal basis $ \{ e_i(s) \}_{i=1}^{2m}$ for $T_{x_0}M$ with $e_1 = \gamma'_0(-s_0)$ and $e_2=J\gamma_0'(-s_0)$, where $J$ is the complex structure.
		Then parallel transport along $\gamma_0$ produces an orthonormal basis $ \{ e_i(s) \}_{i=1}^{2m}$ for $T_{\gamma_0(s)}M$ with $e_1(s)=\gamma_0'(s)$ for each $s\in [-s_0, s_0]$.
		Since $J$ is parallel and $\gamma_0$ is a geodesic, we have $e_2(s)=J\gamma_0'(s)$ for each $s\in [-s_0, s_0]$.

     First derivative inequality  yields
		\begin{equation}
			\label{3.4}
			0 \leq \partial_t
			A_{\varepsilon}(x_0,y_0,t_0)
			=v_t (y_0,t_0)-v_t(x_0,t_0)
			-2\varphi_t - \varepsilon e^{t_0},
		\end{equation}
		and
		$$
		\nabla_x A_{\varepsilon} (x_0,y_0,t_0)
		=\nabla_y A_{\varepsilon} (x_0,y_0,t_0)=0,
		$$
	namely
		\begin{equation}
			\label{3.5}
			\nabla v(x_0,t_0)=\varphi'(s_0,t_0)e_1(-s_0),
			\nabla v(y_0,t_0)=\varphi'(s_0,t_0)e_1(s_0).
		\end{equation}
		Recall that
		\begin{equation}
			\label{3.6}
			\Delta_p v
			= |\nabla v|^{p-2} \Delta v
			+ (p-2)|\nabla v|^{p-4} \nabla^2v(\nabla v,\nabla v),
		\end{equation}
		then plugging  equality \eqref{3.5} into \eqref{3.6} we get
		\begin{align}\label{3.7}
	\begin{split}			
    &\Delta_p v(y_0,t_0)-\Delta_p v(x_0,t_0)\\
				=& (p-1) |\varphi'|^{p-2}
				(v_{11}(y_0,t_0)-v_{11}(x_0,t_0) )+|\varphi'|^{p-2} \sum_{i=2}^{2m}
				(v_{ii}(y_0,t_0)-v_{ii}(x_0,t_0) ).
    \end{split}
		\end{align}
		
  Now we use the first and second variation formulas of arc length to calculate the second derivatives in space variables.
		Suppose
		$\gamma(r, s) :
		[-\delta,\delta]\times [-s_0,s_0] \rightarrow M$ is a smooth variation of $\gamma_0$ such that $\gamma(0,s)=\gamma_0(s)$, then the variation formulas give
		\begin{align}\label{3.8}
		\frac{ \mathrm{d} }{ \mathrm{d} r} \big|_{r=0}
		L[\gamma(r,s)]
		=
		g( T,\gamma_r ) \big|_{-s_0}^{s_0},
		\end{align}
		and
		\begin{align}\label{3.9}
		\frac{ \mathrm{d}^2}{ \mathrm{d} r^2} \big|_{r=0}
		L[\gamma(r,s)]
		=
		\int_{-s_0}^{s_0}
		( | (\nabla_s \gamma_r)^{\perp} |^2-R(T,\gamma_r,T,\gamma_r) ) \ \mathrm{d} s
		+g( T,\nabla_r \gamma_r )\big|_{-s_0}^{s_0},
		\end{align}
		where $T=\p_s \gamma(0,s)$ is the unit tangent vector to $\gamma_0$.

		To calculate the second derivative along $e_1$, we consider the variation $\gamma(r,s):=\gamma_0(s+r\frac{s}{s_0})$, and $ T=\gamma_0'(s)$ and $\gamma_r=\gamma_0'(s)s/s_0$. Then formulas \eqref{3.8} and \eqref{3.9} give
		$$\frac{\mathrm{d}}{\mathrm{d} r} \big|_{r=0} L[\gamma(r)]=2,\quad\text{and}\quad \frac{\mathrm{d}^2}{\mathrm{d} r^2}
		\big|_{r=0} L[\gamma(r)] =0.$$
		Hence the second derivative test for this variation produces
		\begin{equation}
			\label{3.10} v_{11}(y_0,t_0)-v_{11}(x_0,t_0)-2 \varphi''(s_0,t_0) \leq 0.
		\end{equation}
		
		To calculate the second derivative along $e_2$, we
  denote by
  \begin{equation}
			\label{3.11}
			c_{\kappa}(t) =
			\begin{cases}
				\operatorname{cos}(\sqrt{\kappa}t), & \kappa >0,\\
				1, & \kappa =0,\\
				\operatorname{cosh}(\sqrt{-\kappa}t), & \kappa <0,
			\end{cases}
		\end{equation}
  and consider the variation $$\gamma(r,s):=\operatorname{exp}_{\gamma_0(s)}( r\eta(s)e_2(s) ),$$ where $\displaystyle \eta(s)=\frac{c_{4\kappa_1}(s)}{c_{4\kappa_1}(s_0)}$. Clearly $T =e_1(s)$ and $\gamma_r=\eta(s)e_2(s)$, and formulas \eqref{3.8} and \eqref{3.9} imply
		 $$\frac{\mathrm{d}}{\mathrm{d} r} \bigg|_{r=0} L[\gamma(r)]=0$$ and
		\[
		\frac{\mathrm{d}^2}{\mathrm{d} r^2} \bigg|_{r=0} L[\gamma(r)]
		= \int_{-s_0}^{s_0}
		\bigg( (\eta')^2 -\eta^2 R(e_1,e_2,e_1,e_2) \bigg) \
		\mathrm{d} s.
		\]
		So this variation produces
		\[
		v_{22}(y_0,t_0)-v_{22}(x_0,t_0)
		-\varphi'(s_0,t_0) \int_{-s_0}^{s_0}
		\bigg( (\eta')^2 -\eta^2 R(e_1,e_2,e_1,e_2) \bigg) \ \mathrm{d} s \leq 0.
		\]
Using the assumption that $H\ge 4\k_1$ and integration by parts, we estimate that
		\begin{align*}
			&\int_{-s_0}^{s_0}
			\bigg( (\eta')^2 -\eta^2 R(e_1,e_2,e_1,e_2) \bigg)
			\ \mathrm{d} s\\
			=&  \eta'\eta |_{-s_0}^{s_0}
			-\int_{-s_0}^{s_0}
			\eta^2 \big(R(e_1,e_2,e_1,e_2)-4\kappa_1\big)
			\ \mathrm{d} s\\
   	\le &  \eta'\eta |_{-s_0}^{s_0}
			-\int_{-s_0}^{s_0}
			\eta^2 \big(R(e_1,e_2,e_1,e_2)-4\kappa_1\big)
			\ \mathrm{d} s\\
   =& -2 T_{4\kappa_1}(s_0).
		\end{align*}
Therefore we conclude from the above two inequalities that
		\begin{equation}
			\label{3.12}
			v_{22}(y_0,t_0)-v_{22}(x_0,t_0)
			\leq
			-2 T_{4\kappa_1}(s_0) \varphi'(s_0,t_0),
		\end{equation}
where we used the assumption that $\vp'(s,t)>0$.
		
			To calculate the second derivative along $e_i
$($3\le i\le 2m$), we  consider the variation $$\gamma(r,s): = \operatorname{exp}_{\gamma_0(s)}( r\zeta(s)e_i(s) ),$$
		where $\zeta(s)=\frac{c_{\kappa_2}(s)}{c_{\kappa_2}(s_0)}$.
		Similarly, the second variation formula gives
		\[
		v_{ii}(y_0,t_0)-v_{ii}(x_0,t_0)
		-\varphi'(s_0,t_0)
		\int_{-s_0}^{s_0}
		\bigg( (\zeta')^2 -\zeta^2 R(e_1,e_i,e_1,e_i) \bigg) \ \mathrm{d} s \leq 0.
		\]
		Summing over $3\le i\le 2m$ yields
		\[
		\sum_{i=3}^{2m}
		\big(v_{ii}(y_0,t_0)-v_{ii}(x_0,t_0)\big)
		-\varphi'(s_0,t_0)
		\sum_{i=3}^{2m}
		\int_{-s_0}^{s_0}
		\bigg(  (\zeta')^2 -\zeta^2 R(e_1,e_i,e_1,e_i) \bigg) \ \mathrm{d} s \leq 0.
		\]
		Using integration by parts and assumption that $\Ric^\perp\ge 2(m-1)\k_2$, we have
		\begin{align*}
			&\sum_{i=3}^{2m}
			\int_{-s_0}^{s_0}
			\bigg( (\zeta')^2 -\zeta^2 R(e_1,e_i,e_1,e_i) \bigg) \mathrm{d} s\\
			=& -4(m-1) T_{\kappa_2}(s_0)
			-\int_{-s_0}^{s_0}
			\zeta^2
			\big(
			-2(m-1)\kappa_2
			+\sum_{i=3}^{2m} R(e_1,e_i,e_1,e_i)
			\big)
			\mathrm{d} s\\
   \le &  -4(m-1) T_{\kappa_2}(s_0),
		\end{align*}
		thus we get
		\begin{equation}
			\label{3.13}
			\sum_{i=3}^{2m}
			\big(v_{ii}(y_0,t_0)-v_{ii}(x_0,t_0)\big)
			\leq
			-4(m-1) T_{\kappa_2}(s_0) \varphi'(s_0,t_0).
		\end{equation}
		
		Inserting inequalities \eqref{3.10}, \eqref{3.12} and \eqref{3.13} into \eqref{3.7}, we get
		\begin{equation}
			\label{3.14}
			\Delta_p v(y_0,t_0)-\Delta_p v(x_0,t_0)
			\leq
			2\mathfrak{L} \varphi(s_0,t_0).
		\end{equation}
		Let $h(t):=|t|^{-\frac{p-2}{p-1}}t$, which is odd, increasing and convex for all $t>0$, then  \eqref{3.14} implies
		\[
		h(\Delta_p v(y_0,t_0))
		\leq
		h(\Delta_p v(x_0,t_0) +2\mathfrak{L} \varphi).
		\]
		Applying Lemma 2.1 of  \cite{LW21-c} with
		$t=\Delta_p v(x_0,t_0)$ and
		$\delta= -\mathfrak{L} \varphi\geq0$, we get
		\[
		h(\Delta_p v(x_0,t_0) +2\mathfrak{L} \varphi)
		\leq
		h(\Delta_p v(x_0,t_0))+2h(\mathfrak{L} \varphi),
		\]
		that is
		\begin{equation}
			\label{3.15}
			\frac{1}{2}\big[
			|\Delta_p v|^{-\frac{p-2}{p-1}}\Delta_p v \big|_{(y_0,t_0)}
			-|\Delta_p v|^{-\frac{p-2}{p-1}}\Delta_p v \big|_{(x_0,t_0)}
			\big]
			\leq  |\mathfrak{L}\varphi|^{-\frac{p-2}{p-1}}
			\mathfrak{L}\varphi.
		\end{equation}
		Combining \eqref{3.4} and \eqref{3.15}, we get
		\[
		\varphi_t \leq
		|\mathfrak{L}\varphi|^{-\frac{p-2}{p-1}}\mathfrak{L}\varphi
		-\frac{\varepsilon}{2} e^{t_0}
		< |\mathfrak{L}\varphi|^{-\frac{p-2}{p-1}}\mathfrak{L}\varphi,
		\]
		which contradicts the inequality in assumption (2), so
  $$A_\varepsilon(x,y,t)<0$$
  holds true for all $x,y\in M$ and $t>0$. Hence we complete the proof of Theorem \ref{thm 3.1}.
	\end{proof}	
	
  	On the interval $[0,D/2]$, we define the following corresponding one-dimensional eigenvalue problem
	\[
	\bar{\sigma}_1
	=
	\inf
	\bigg\{
	\frac
	{ \int_0^{\frac{D}{2}} |\phi'|^p c_{\kappa_2}^{2m-2} c_{4\kappa_1} \ \mathrm{d} s }
	{ \int_0^{\frac{D}{2}} |\phi|^p c_{\kappa_2}^{2m-2} c_{4\kappa_1} \ \mathrm{d} s }
	\ \big|\phi \in W^{1,p}\big((0,D/2)\big)\backslash\{0\},
	\  \phi(0)=0
	\bigg\},
	\]
	where $c_{\kappa}(t)$ is defined by \eqref{3.11} and $D$ is the diameter of $M$. Noting from  Remark \ref{rmk 2.1} that $D\le \pi/\sqrt{4\k_1}$ if $\k_1>0$, and $D\le \pi/\sqrt{\k_2}$ if $\k_2>0$.
	
	\begin{lemma}
	\label{lm 3.1}
		\[
		\bar{\mu}_1(m,p,\kappa_1,\kappa_2,D)
		=\bar{\sigma}_1(m,p,\kappa_1,\kappa_2,D).
		\]
	\end{lemma}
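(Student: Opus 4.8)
\textbf{Proof proposal for Lemma \ref{lm 3.1}.}

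The plan is to show that the one-dimensional eigenvalue problem \eqref{1.5} with Neumann boundary conditions on $[-D/2,D/2]$ has, as its first nonzero eigenvalue, exactly the Rayleigh-type quotient $\bar\sigma_1$ defined on the half-interval $[0,D/2]$ with a Dirichlet condition at $0$. The bridge between the two is the observation that an eigenfunction $\varphi$ of \eqref{1.5} realizing $\bar\mu_1$ can be taken to be odd about the origin, so that restricting to $[0,D/2]$ loses no information, and that \eqref{1.5} is precisely the Euler--Lagrange equation of the weighted $p$-Rayleigh quotient appearing in the definition of $\bar\sigma_1$. I would first record the elementary identity $\frac{d}{ds}\log\!\big(c_{\kappa_2}^{2m-2}c_{4\kappa_1}\big) = -\big(2(m-1)T_{\kappa_2}+T_{4\kappa_1}\big)$, which follows directly from \eqref{3.11} and \eqref{eqn T kappa} since $c_\kappa' = -\kappa\,(\text{the corresponding sine}) $ and $T_\kappa = -c_\kappa'/c_\kappa$ up to the sign conventions in play; this is the computation that makes the weight $w(s):=c_{\kappa_2}^{2m-2}(s)\,c_{4\kappa_1}(s)$ the natural density turning $\mathfrak L$ into a divergence-form operator.

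Next I would carry out the variational side. For $\phi\in W^{1,p}((0,D/2))$ with $\phi(0)=0$, the Euler--Lagrange equation of the constrained minimization defining $\bar\sigma_1$ is
\begin{equation*}
-\big(|\phi'|^{p-2}\phi'\,w\big)' = \sigma\,|\phi|^{p-2}\phi\,w,
\end{equation*}
and expanding the derivative on the left using the logarithmic-derivative identity above gives exactly
\begin{equation*}
(p-1)|\phi'|^{p-2}\phi'' - \big(2(m-1)T_{\kappa_2}+T_{4\kappa_1}\big)|\phi'|^{p-2}\phi' = -\sigma\,|\phi|^{p-2}\phi,
\end{equation*}
i.e.\ the ODE \eqref{1.5}. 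The natural boundary condition at $s=D/2$ produced by the free endpoint is $\phi'(D/2)=0$ (using that $w>0$ there, which holds because $D\le\pi/\sqrt{4\kappa_1}$ and $D\le\pi/\sqrt{\kappa_2}$ by Remark \ref{rmk 2.1}), while $\phi(0)=0$ is imposed. Extending the minimizer $\phi$ to $[-D/2,D/2]$ as an odd function $\varphi$, one checks that $\varphi\in C^1$ across $0$ (since $\phi(0)=0$ forces no corner in the $p$-Laplacian sense) and that $\varphi$ solves \eqref{1.5} on the full interval with $\varphi'(\pm D/2)=0$; hence $\bar\sigma_1$ is an eigenvalue of the Neumann problem, giving $\bar\mu_1\le\bar\sigma_1$.

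For the reverse inequality I would start from an eigenfunction $\varphi$ of \eqref{1.5} attaining $\bar\mu_1$, normalized with $\int_{-D/2}^{D/2}|\varphi|^{p-2}\varphi\,w=0$ (orthogonality to constants, the $p$-analogue of mean zero). A Sturm-type argument—$\varphi$ changes sign exactly once, as it is the first nonconstant eigenfunction—lets one locate the zero, and after a translation/reflection one reduces to the case that $\varphi$ is odd about its zero and monotone on each side; restricting to $[0,D/2]$ then yields an admissible test function for $\bar\sigma_1$ whose Rayleigh quotient equals $\bar\mu_1$, so $\bar\sigma_1\le\bar\mu_1$. The main obstacle I anticipate is the regularity and sign-structure analysis of the degenerate ODE \eqref{1.5}: justifying that the first eigenfunction is $C^1$, has a single interior zero, and can be symmetrized without raising the Rayleigh quotient requires care because $|\varphi'|^{p-2}$ degenerates where $\varphi'=0$ (at the interior zero when $p<2$, at the endpoints when $p>2$). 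I would handle this by working with the weak (divergence) formulation throughout, invoking standard $p$-Laplacian regularity away from the degeneracy set, and using the strict positivity of the weight $w$ on the closed interval to rule out accumulation of critical points; the sign-change count follows from the usual oscillation/Wirtinger argument for weighted $p$-eigenvalue problems.
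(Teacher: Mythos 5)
Your direction $\bar\mu_1\le\bar\sigma_1$ is correct but does more than is needed: the paper simply takes the minimizer $\phi$ of $\bar\sigma_1$, odd-extends it to $\bar\phi$ on $[-D/2,D/2]$, and observes that $\bar\phi$ is admissible in the variational characterization of $\bar\mu_1$ (the orthogonality constraint $\int|\bar\phi|^{p-2}\bar\phi\,w=0$ holds automatically because $\bar\phi$ is odd and $w$ is even) with the same Rayleigh quotient. No verification that the odd extension is $C^1$ or actually solves \eqref{1.5} is required.

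The direction $\bar\sigma_1\le\bar\mu_1$ in your proposal has a genuine gap. You want to reduce, ``after a translation/reflection,'' to the case where the first Neumann eigenfunction $\psi$ is odd about its zero and then restrict to $[0,D/2]$. But the weight $w=c_{\kappa_2}^{2m-2}c_{4\kappa_1}$ is even only about $s=0$; translating the eigenfunction so that its sign change sits at the origin destroys the evenness of the coefficients, so the translated function no longer solves \eqref{1.5}, and the restricted Rayleigh quotient is no longer controlled by $\bar\mu_1$. You would instead need to argue that the zero \emph{already} lies at $0$ (which is a nontrivial symmetry/simplicity claim for the weighted $p$-Laplacian that you neither state nor prove), rather than move it there. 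The paper circumvents all of this: from the divergence form of \eqref{1.5}, integrating over $[-D/2,D/2]$ with the Neumann conditions forces $\int w|\psi|^{p-2}\psi=0$, so $\psi$ has a zero $s_0$; by reflecting $s\mapsto -s$ (which is a genuine symmetry of the problem) one may take $s_0\in[0,D/2)$; then setting $\psi\equiv 0$ on $[0,s_0)$ produces a $W^{1,p}$ test function for $\bar\sigma_1$, and multiplying the ODE by $\psi$ and integrating by parts on $[s_0,D/2]$, using $\psi(s_0)=0$ and $\psi'(D/2)=0$, shows its Rayleigh quotient equals $\bar\mu_1$. This truncation argument is both shorter and avoids every one of the regularity/sign-structure issues you flag at the end as potential obstacles.
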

	\begin{proof}
		Suppose $\phi(s)$ is an eigenfunction on $[0,D/2]$ corresponding to $\bar{\sigma}_1(m,p,\k_1,\k_2,D)$, and define a test function for $\bar{\mu}_1(m,p,\kappa_1,\kappa_2,D)$ by
		\[
		\bar{\phi}(s)=
		\begin{cases}
			\phi(s), & s \in [0, D/2],\\
			-\phi(-s), &s \in [-D/2,0],
		\end{cases}
		\]
		then we get
		\[
		\bar{\mu}_1(m,p,\kappa_1,\kappa_2,D)
		\leq
		\bar{\sigma}_1(m,p,\kappa_1,\kappa_2,D).
		\]
		
		On the other hand, suppose $\psi(s)$ is an eigenfunction corresponding to $\bar{\mu}_1(m,p,\kappa_1,\kappa_2,D)$, and then
		direct calculation gives
		\[
		(|\psi'|^{p-2} \psi' c_{\kappa_2}^{2m-2} c_{4\kappa_1} )'
		=
		-c_{\kappa_2}^{2m-2} c_{4\kappa_1}
		\bar{\mu}_1 |\psi|^{p-2} \psi.
		\]
		Integrating  the above equation over $[-D/2,D/2]$ yields
		\[
		0=\int_{-\frac{D}{2}}^{\frac{D}{2}}
		c_{\kappa_2}^{2m-2} c_{4\kappa_1} |\psi|^{p-2} \psi
		\ \mathrm{d} s,
		\]
		so there exists $s_0 \in (-D/2,D/2)$ such that $\psi(s_0)=0$. Without loss of generality, we  assume further that $s_0 \in [0,D/2)$, and define $\psi(s)=0$ for $s\in [0,s_0)$. Then $\psi$ is a test function for  $\bar{\sigma}_1$, and we have
		\begin{align*}
			\bar{\sigma}_1(m,p,\kappa_1,\kappa_2,D)
			\leq &
			\frac
			{ \int_0^{\frac{D}{2}} |\psi'|^p c_{\kappa_2}^{2m-2} c_{4\kappa_1} \ \mathrm{d} s}
			{ \int_0^{\frac{D}{2}} |\psi|^p c_{\kappa_2}^{2m-2} c_{4\kappa_1} \ \mathrm{d} s }\\
			=&
			\frac
			{ \int_{s_0}^{\frac{D}{2}} |\psi'|^p c_{\kappa_2}^{2m-2} c_{4\kappa_1} \ \mathrm{d} s}
			{ \int_{s_0}^{\frac{D}{2}} |\psi|^p c_{\kappa_2}^{2m-2} c_{4\kappa_1} \ \mathrm{d} s}\\
			=& \bar{\mu}_1(m,p,\kappa_1,\kappa_2,D).
		\end{align*}
		Thus Lemma \ref{lm 3.1} holds.	
	\end{proof}
	
	\begin{lemma}
	\label{lm existence of odd function}
		There exists an odd eigenfunction $\phi$ corresponding to
		$\bar{\mu}_1(m,p,\kappa_1,\kappa_2,D)$
		satisfying
		\begin{align}\label{3.16}
		(p-1)|\phi'|^{p-2}\phi''
		-( 2(m-1)T_{\kappa_2} +T_{4\kappa_1} )|\phi'|^{p-2}\phi'
		= -\bar{\mu}_1(m,p,\kappa_1,\kappa_2,D)
		|\phi|^{p-2} \phi
		\end{align}
		on $[0,D/2]$ with $\phi(s)>0$ for $s\in (0,D/2]$, $\phi'(s) >0$ for $s \in [0,D/2)$, and $\phi'(D/2)=0$.
	\end{lemma}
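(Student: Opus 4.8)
The plan is to produce $\phi$ as a minimizer of the weighted one-dimensional Rayleigh quotient defining $\bar{\sigma}_1$ in Lemma \ref{lm 3.1}, and then to read off its sign and monotonicity directly from the Euler--Lagrange equation it satisfies; since $\bar{\sigma}_1=\bar{\mu}_1$ by Lemma \ref{lm 3.1}, the odd reflection of this $\phi$ will be the desired eigenfunction. Concretely, I would first run the direct method. Set $w(s):=c_{\kappa_2}(s)^{2m-2}c_{4\kappa_1}(s)$; by Remark \ref{rmk 2.1} one has $D/2\le\pi/(4\sqrt{\kappa_1})$ if $\kappa_1>0$ and $D/2\le\pi/(2\sqrt{\kappa_2})$ if $\kappa_2>0$, so $w$ is continuous and bounded on $[0,D/2]$ and strictly positive on $[0,D/2)$; in particular the Rayleigh quotient of any admissible $\phi\not\equiv 0$ with $\phi(0)=0$ is strictly positive, so $\bar{\mu}_1=\bar{\sigma}_1>0$. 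Taking a minimizing sequence for $\bar{\sigma}_1$ normalized by $\int_0^{D/2}|\phi_k|^p w\,\mathrm{d}s=1$, it is bounded in $W^{1,p}((0,D/2))$; by the compact embedding $W^{1,p}((0,D/2))\hookrightarrow C^0([0,D/2])$ and weak lower semicontinuity of $\phi\mapsto\int_0^{D/2}|\phi'|^p w\,\mathrm{d}s$ one extracts a minimizer $\phi$ with $\phi(0)=0$. Replacing $\phi$ by $|\phi|$, which has the same Rayleigh quotient and remains admissible, I may assume $\phi\ge 0$.

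Next I would record the Euler--Lagrange equation: $\phi$ solves, weakly on $(0,D/2)$,
\[
(|\phi'|^{p-2}\phi'\,w)'=-\bar{\mu}_1\,w\,\phi^{p-1},
\]
together with the natural (free) boundary condition at $s=D/2$. Standard one-dimensional regularity for this equation gives $\phi\in C^1([0,D/2])$ with $|\phi'|^{p-2}\phi'\in C^1$, so the natural condition reads $\phi'(D/2)=0$; and since $c_\kappa'/c_\kappa=-T_\kappa$, i.e. $(\log w)'=-(2(m-1)T_{\kappa_2}+T_{4\kappa_1})$, dividing the displayed equation by $w$ turns it into \eqref{3.16} at every point where $\phi'\ne 0$, with $\bar{\mu}_1=\bar{\sigma}_1$ supplied by Lemma \ref{lm 3.1}.

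For the sign and monotonicity: since $\phi\ge 0$, $w>0$ and $\bar{\mu}_1>0$, the right-hand side of the displayed equation is $\le 0$, so $s\mapsto|\phi'|^{p-2}\phi'(s)\,w(s)$ is non-increasing on $[0,D/2]$ and vanishes at $s=D/2$, hence is $\ge 0$ throughout; thus $\phi'\ge 0$, so $\phi$ is non-decreasing from $\phi(0)=0$ and therefore $\phi(D/2)>0$. If $\phi'(s_1)=0$ for some $s_1\in[0,D/2)$, then $|\phi'|^{p-2}\phi'\,w$, being non-increasing, nonnegative and zero at $s_1$, vanishes on all of $[s_1,D/2]$, forcing $\phi\equiv\phi(D/2)>0$ there; substituting into the displayed equation then gives $\bar{\mu}_1=0$, a contradiction. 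Hence $\phi'>0$ on $[0,D/2)$ and $\phi>0$ on $(0,D/2]$. Finally, the odd extension of $\phi$ to $[-D/2,D/2]$ is the required odd eigenfunction for $\bar{\mu}_1$, exactly as in the proof of Lemma \ref{lm 3.1}, the $C^1$ (indeed $C^2$, with $\phi''(0)=0$) matching at the origin following from oddness and from $T_\kappa(0)=0=\phi(0)$.

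The calculus-of-variations part is routine. The step I expect to require the most care is the regularity of the minimizer up to the right endpoint and the precise meaning of the natural boundary condition $\phi'(D/2)=0$ in the degenerate case $w(D/2)=0$, which occurs exactly when $\kappa_1>0$ and $D=\pi/(2\sqrt{\kappa_1})$, or $\kappa_2>0$ and $D=\pi/\sqrt{\kappa_2}$. There the boundary term no longer forces $\phi'(D/2)=0$ directly, and one must instead exploit that the vanishing of $w$ at $D/2$ is only of power type: integrating the equation from $s$ to $D/2$ and using $\phi(D/2)>0$ shows $(\phi')^{p-1}(s)=\bar{\mu}_1 w(s)^{-1}\int_s^{D/2}w\,\phi^{p-1}\to 0$ as $s\to D/2$, which yields $\phi'(D/2)=0$ and lets one push the $C^1$ conclusion, hence \eqref{3.16}, up to the boundary.
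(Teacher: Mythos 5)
Your proposal follows essentially the same route as the paper's (very terse) proof: obtain the eigenfunction on $[0,D/2]$ from the variational characterization $\bar\sigma_1$, pass to the divergence form $(|\phi'|^{p-2}\phi'\,w)'=-\bar\mu_1 w|\phi|^{p-2}\phi$ with $w=c_{\kappa_2}^{2m-2}c_{4\kappa_1}$, and integrate from $s$ to $D/2$ using $\phi'(D/2)=0$ to conclude $\phi'>0$ on $[0,D/2)$. You supply details the paper elides — the direct-method existence, the reduction to $\phi\ge 0$ via $|\phi|$, and in particular a careful treatment of the natural boundary condition when $w(D/2)=0$ (the extremal diameter case), which the paper's one-line argument implicitly assumes away — so the proof is correct and a more complete version of the same argument.
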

	
	\begin{proof}
		From the proof of Lemma \ref{lm 3.1}, we see that
		there exists an odd function $\phi$ corresponding to $\bar{\mu}_1(m,p,\kappa_1,\kappa_2,D)$ with $\phi(0)=0$ and $\phi'(D/2)=0$, which does not change sign in $(0,D/2]$.
		Since equation \eqref{3.16} is equivalent to
		\[
		(|\phi'|^{p-2} \phi' c_{\kappa_2}^{2m-2} c_{4\kappa_1} )'
		=-\bar{\mu}_1
		c_{\kappa_2}^{2m-2} c_{4\kappa_1} |\phi|^{p-2} \phi
		\]
  and $\phi'(D/2)=0$, 
		then  $\phi'(s)>0$ on $[0,D/2)$.
	\end{proof}
	Now we turn to prove Theorem \ref{thm 1.2}.
	\begin{proof}[Proof of Theorem \ref{thm 1.2}]
		For any $D_1>D$, let $\bar{\mu}_1=\bar{\mu}_1(m,p,\kappa_1,\kappa_2,D_1)$
		be the first nonzero Neumann eigenvalue of the eigenvalue problem \eqref{1.5}.
		By Lemma \ref{lm existence of odd function}, there exists an odd eigenfunction $\phi(s)$ corresponding to $\bar{\mu}_1$ such that $\phi(s)>0$ on $(0,D_1/2]$,
		$\phi'(s) >0$ on $[0,D_1/2)$ and $\phi'(D_1/2)=0$. Let $u(x)$ be an eigenfucntion corresponding to $\mu_{1, p}(M)$, and it is well known that $u(x)\in C^{1,\a}(M)$ for some $\a\in(0,1)$, then
	 there exists $C_1>0$ such that
		\begin{equation}
		\label{eqn lower bound of eigenfunction u}
			|u(y)-u(x)|
			\leq C_1 d(x,y)
		\end{equation}
  for $x,y\in M$. Noting that  $\vp'$ is positive on $[0, D/2]$, then from equation \eqref{3.16} we deduce that
  $\vp$ is smooth on  $[0, D/2]$, and for all $x,y\in M$ it holds
		\begin{equation}
		\label{eqn lower bound of eigenfunction phi}
			\phi(\frac{d(x,y)}{2})
			\geq
			C_2 d(x,y)		
		\end{equation}
        for some $C_2>0$.
		By (\ref{eqn lower bound of eigenfunction u}) and (\ref{eqn lower bound of eigenfunction phi}), we can choose a $C>0$ such that
		\[
		|u(y)-u(x)| \leq 2C \phi(\frac{d(x,y)}{2})
		\]
		for all $x,y\in M$.

		Let
		\begin{align*}
		v(x,t)=\exp(-\mu_{1,p}^{\frac{1}{p-1}}t) u(x),
  \end{align*}
  and
  \begin{align*}
  \varphi(s,t)=C \exp(-\bar{\mu}_1^{\frac{1}{p-1}}t) \phi(s).
		\end{align*}
It is easy to verify that $v$ and $\varphi$ satisfy the conditions in Theorem \ref{thm 3.1}, and then it holds
		\[
		v(y,t)-v(x,t) \leq
		2 \varphi(\frac{d(x,y)}{2},t)
		\]
  for all $t>0$, namely
		\begin{equation*}
		\exp(-\mu_{1,p}^{\frac{1}{p-1}}t) \big(u(y)- u(x)\big)
		\leq
		2 C \exp(-\bar{\mu}_{1}^{\frac{1}{p-1}}t) \phi(\frac{d(x,y)}{2}).
		\end{equation*}
		As $t \rightarrow \infty$, the above inequality gives
		\[
		\mu_{1,p} \geq
		\bar{\mu}_1 (m,p,\kappa_1,\kappa_2,D_1),
		\]
		then Theorem \ref{thm 1.2} follows by letting $D \rightarrow D_1$.	
	\end{proof}

\section{The First Dirichlet Eigenvalue}
	In this section, we give the proof of Theorem \ref{thm 1.3}.
	The key tools are a comparison theorem for distance to the boundary and  Barta's inequality.
	Let $M$ be a compact manifold with smooth boundary $\partial M$, and define the distance function to the boundary of $M$ by
	\[
	d(x, \partial M)=\inf \Big \{ d(x,y): y \in \partial M \Big\}.
	\]
By choosing $\alpha=(p-1)|\nabla \psi|^{p-2}$ and $\beta=|\nabla \psi|^{p-2}$ in Theorem 6.1 of \cite{LW21}, we have the following lemma.
	\begin{lemma}
	\label{lemma main dirichlet eigenvalue}
		Let $(M^m, g, J)$ be a compact K\"ahler manifold with smooth  boundary $\partial M$.
	    Suppose that $H \geq 4\kappa_1$ and $\operatorname{Ric}^{\perp}\geq2(m-1)\kappa_2$ for some $\kappa_1,\kappa_2\in \mathbb{R}$, and the second fundamental form on $\partial M$ is bounded from below by $\Lambda \in \mathbb{R}$.
	  Let $p\in(1, \infty)$, and assume $\varphi$ is a smooth function on $[0,R]$ satisfying $\varphi'\geq 0$.
	    Then for any smooth function $\psi$ satisfying
	    \[
		\psi(x) \leq \varphi(d(x,\partial M)) \quad \text{for} \quad x \in M,\quad \text{and}\quad
		\psi(x_0) =\varphi(d(x_0,\partial M)),
		\]
	it holds
		\[
	    \Delta_p \psi (x_0)
	    \leq
	    \bar{\mathfrak{L}}\varphi(d(x_0,\partial M)),
	    \]
    where one-dimensional operator $\bar{\mathfrak{L}}$ is defined by
\begin{align}
	\bar{\mathfrak{L}}\varphi
	=(p-1)|\varphi'|^{p-2}\varphi''
	-\big(2(m-1)T_{\kappa_2,\Lambda}
	+T_{4\kappa_1,\Lambda}\big) |\varphi'|^{p-2} \varphi'
\end{align}
for all $\varphi\in C^2([0, R])$, and $T_{\kappa,\Lambda}$ is defined by \eqref{1.8}.
	\end{lemma}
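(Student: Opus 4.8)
The final statement to prove is Lemma \ref{lemma main dirichlet eigenvalue}, which is explicitly claimed in the excerpt to follow from Theorem 6.1 of \cite{LW21} by making a specific choice of the coefficient functions $\alpha$ and $\beta$. So the plan is not to reprove the comparison estimate from scratch, but to verify that the $p$-Laplacian operator, when written in the right form, matches the divergence-type operator to which the cited theorem applies, and that the stated choice $\alpha=(p-1)|\nabla\psi|^{p-2}$, $\beta=|\nabla\psi|^{p-2}$ produces exactly the one-dimensional operator $\bar{\mathfrak{L}}$ displayed in the lemma.

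\begin{proof}[Proof of Lemma \ref{lemma main dirichlet eigenvalue}]
The plan is as follows. First I would recall the general setup of \cite[Theorem 6.1]{LW21}: it treats operators of the form $L\psi=\alpha\,\nabla^2\psi(\nabla\psi/|\nabla\psi|,\nabla\psi/|\nabla\psi|)+\beta\,\big(\Delta\psi-\nabla^2\psi(\nabla\psi/|\nabla\psi|,\nabla\psi/|\nabla\psi|)\big)$ with suitable nonnegative coefficients $\alpha,\beta$ depending on $|\nabla\psi|$, and produces, for any $\psi$ touching $\varphi(d(\cdot,\partial M))$ from below at $x_0$, the one-sided bound $L\psi(x_0)\le \alpha\,\varphi''+\beta\,\big(2(m-1)T_{\kappa_2,\Lambda}+T_{4\kappa_1,\Lambda}\big)(-\varphi')$ evaluated at $d(x_0,\partial M)$, where the curvature and boundary convexity hypotheses $H\ge 4\kappa_1$, $\Ric^\perp\ge 2(m-1)\kappa_2$, $\mathrm{II}\ge\Lambda$ enter precisely through the Jacobi-field comparison used there (this is the K\"ahler analogue of the Laplacian comparison, with the holomorphic direction $J\nabla d$ and the $2(m-1)$ orthogonal directions treated separately, which is why the $T_{4\kappa_1,\Lambda}$ term appears once and the $T_{\kappa_2,\Lambda}$ term with weight $2(m-1)$).

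Second, I would rewrite the $p$-Laplacian in exactly this form. Using $\Delta_p\psi=|\nabla\psi|^{p-2}\Delta\psi+(p-2)|\nabla\psi|^{p-4}\nabla^2\psi(\nabla\psi,\nabla\psi)$ (equation \eqref{3.6}), and writing $\nu=\nabla\psi/|\nabla\psi|$ at $x_0$ (which makes sense since $\varphi'>0$ forces $\nabla\psi(x_0)\ne 0$, as in the proof of Theorem \ref{thm 3.1}), one gets
\[
\Delta_p\psi=|\nabla\psi|^{p-2}\Big[(p-1)\nabla^2\psi(\nu,\nu)+\big(\Delta\psi-\nabla^2\psi(\nu,\nu)\big)\Big].
\]
Thus $\Delta_p\psi=L\psi$ with $\alpha=(p-1)|\nabla\psi|^{p-2}$ and $\beta=|\nabla\psi|^{p-2}$, which is exactly the stated choice. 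At the touching point, $|\nabla\psi(x_0)|=\varphi'(d(x_0,\partial M))$ and $\nabla^2\psi(x_0)$ is bounded above (in the appropriate one-sided, barrier sense) by the Hessian of $\varphi(d(\cdot,\partial M))$; applying \cite[Theorem 6.1]{LW21} with these $\alpha,\beta$ then gives
\[
\Delta_p\psi(x_0)\le |\varphi'|^{p-2}\Big[(p-1)\varphi''-\big(2(m-1)T_{\kappa_2,\Lambda}+T_{4\kappa_1,\Lambda}\big)\varphi'\Big]\Big|_{d(x_0,\partial M)}=\bar{\mathfrak{L}}\varphi(d(x_0,\partial M)),
\]
which is the claim.

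The main obstacle — really the only nontrivial point, since the algebra above is routine — is a regularity/sign bookkeeping issue: one must make sure $\alpha$ and $\beta$ as chosen satisfy whatever monotonicity or positivity hypotheses \cite[Theorem 6.1]{LW21} imposes on the coefficients. Here $\beta=|\nabla\psi|^{p-2}>0$ near $x_0$, and $\alpha=(p-1)\beta$ is a positive constant multiple of $\beta$; since the comparison argument only uses that the coefficient of the radial second-derivative term is nonnegative and that of the transversal Laplacian term is nonnegative, both are satisfied for every $p\in(1,\infty)$, with no restriction to $p\le 2$ (in contrast to Theorem \ref{thm 3.1}). One should also note that $d(\cdot,\partial M)$ need not be smooth everywhere, but since we only need the inequality at the single contact point $x_0$ and $\psi$ is smooth there while lying below $\varphi(d(\cdot,\partial M))$, the standard barrier argument (cutting off at the cut locus of $\partial M$) applies verbatim as in \cite[Section 6]{LW21}. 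Assembling these remarks completes the proof.
\end{proof}
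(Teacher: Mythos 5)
Your proposal is correct and takes essentially the same route as the paper, which simply cites Theorem 6.1 of \cite{LW21} with the choices $\alpha=(p-1)|\nabla\psi|^{p-2}$ and $\beta=|\nabla\psi|^{p-2}$; your write-up merely supplies the (correct) algebra rewriting $\Delta_p\psi$ in the required divergence-type form and the observation that these coefficients are nonnegative for all $p\in(1,\infty)$, both of which the paper leaves implicit.
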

	Consider the following  one-dimensional eigenvalue problem on $[0, R]$
\begin{equation}
		\label{4.2}
		\left \{
		\begin{aligned}
			& \bar{\mathfrak{L}}\varphi
			= -\lambda |\varphi|^{p-2} \varphi,\\
			&\varphi(0)=0,\ \varphi'(R)=0,
		\end{aligned}
		\right.
	\end{equation}
and denote by $\bar{\lambda}_1(m,p,\kappa_1,\kappa_2,D)$, written as $\bar{\lambda}_1$ for short,   the first eigenvalue of  problem \eqref{4.2}. Then it is easy to see that $\bar{\lambda}_1$ can be characterized by
	\[
	\bar{\lambda}_1
	=\inf
	\bigg\{
	\frac
	{ \int_0^{R} |\phi'|^p C_{\kappa_2,\Lambda}^{2m-2} C_{4\kappa_1,\Lambda} \ \mathrm{d} s }
	{ \int_0^{R} |\phi|^p C_{\kappa_2,\Lambda}^{2m-2} C_{4\kappa_1,\Lambda} \ \mathrm{d} s }
	\ \big|\phi \in W^{1,p}\big((0,R)\big)\backslash\{0\},\  \phi(0)=0
	\bigg\},
	\]
	where $C_{\kappa,\Lambda}$ is defined by \eqref{1.7}.
	
	\begin{proof}[Proof of Theorem \ref{thm 1.3}]
	    Let $\varphi$ be an eigenfunction corresponding to $\bar\lambda_1$, then
	    \[
		\bar{\mathfrak{L}}\varphi
		= -\bar{\lambda}_1 |\varphi|^{p-2} \varphi
		\]
	    with $\varphi(0)=0$ and $\varphi'(R)=0$.
	    Similarly as in the proof of Lemma \ref{lm existence of odd function}, we can choose $\varphi$ such that $\varphi(s) >0$ on $(0,R]$, and $\varphi'(s)>0$ on $[0, R)$. Define a trial function for $\lambda_{1,p}(M)$ by
	    \[
		v(x):=\varphi(d(x,\partial M)),
		\]
	  then Lemma \ref{lemma main dirichlet eigenvalue} gives
		\begin{align}\label{4.3}
		\Delta_p v(x)
		\leq
		\bar{\mathfrak{L}}\varphi(d(x,\partial M))
		\end{align}
away from the cut locus of $\p M$, and thus globally in the distributional sense, see \cite[Lemma 5.2]{LW21-b}.
	    Recall that $\varphi$ is an eigenfunction with respect to $\bar\lambda_1$, namely
	    \begin{align}\label{4.4}
		\bar{\mathfrak{L}}\varphi(d(x,\partial M))
		=
		-\bar{\lambda}_1 |v(x)|^{p-2} v(x),
		\end{align}
		thus we conclude from \eqref{4.3} and \eqref{4.4} that
	    \begin{align*}
		\Delta_p v
		\leq
		-\bar{\lambda}_1 |v|^{p-2} v, \quad x\in M.
	    \end{align*}
By the definition of $v(x)$, we see $v(x)>0$ for $x\in M$, and $v(x)=0$ for $x\in \p M$. Then using Barta's inequality (cf. \cite[Theorem 3.1]{LW21-b}), we have
		\[
		\lambda_{1,p}(M) \geq \bar{\lambda}_1,
		\]
		proving Theorem \ref{thm 1.3}.
	\end{proof}
	
Lastly, we emphasize that when $\kappa_1=\kappa_2=\k$,  a standard argument (see for instance \cite[Section 6]{LW21-b}) indicates that when $M$ is a $(\kappa,\Lambda)$-model space in the complex space forms, the first Dirichlet eigenfunction of the $p$-Laplacian can be written in the form $u=\varphi\circ d(x,\partial M)$, and $\vp$ is the first eigenfunction of one-dimensional problem \eqref{4.2}, which gives the equality case of Theorem \ref{thm 1.2}. Therefore estimate \eqref{1.9} is sharp when $\k_1=\k_2$.

	\bibliographystyle{plain}
	\bibliography{ref}
	
\end{document}